\def \N {{\mathbb{N}}}
\def \Z {{\mathbb{Z}}}
\def \al {{\alpha}}
\def \be {{\beta}}
\def \ga {{\gamma}}
\newtheorem*{theorem*}{Theorem}
\newtheorem{theorem}{Theorem}
\newtheorem{lemma}[theorem]{Lemma}
\newtheorem{ex}[theorem]{Example}
\newtheorem*{ex*}{Example}
\title{Permanents of $3\times3$ Invertible Matrices Modulo $n$}
\author{Ayush Bohra and A. Satyanarayana Reddy \\
Department of 
Mathematics, Shiv Nadar 
University, India-201314\\ (e-mail: ab424@snu.edu.in, satya.a@snu.edu.in).

}
\date{}
\begin{document}
\maketitle
\begin{abstract}
 We count the number of elements in the set $$G_{3}(n,x) = \{M \in GL_3(\Z_n) \mid perm(M) \equiv x \pmod{n} \}.$$
\end{abstract}
{\bf Keywords :} modular arithmetic; permanents; determinants\\
{\bf Mathematics Subject Classification (2020):} 05B10; 15A15.

\section{Introduction}\label{sec:intro}
In~\cite{A:S}, we studied the action of the permanent on $GL_2(\Z_n)$ and counted the number of invertible matrices which have permanent congruent to $x \pmod{n}.$ Let $$G_{3}(n,x) = \{M \in GL_3(\Z_n) \mid perm(M) \equiv x \pmod{n} \}.$$ Let $g_3(n,x)$ denote  the cardinality of $G_{3}(n,x).$ We recall some results from the discussion of the $2\times2$ case which carry over naturally to $3\times3$ matrices as well. \begin{enumerate}
    \item \textbf{Multiplicative Property (MP):} For $a,b,x \in \N , (a,b) = 1 , $ we have $g_{3}(ab,x) = g_{3}(a,x)\times g_{3}(b,x).$
    \item \textbf{Invariance Property (IP):} Let $p$ be a prime number and $k \in \N.$ Let $m \in \N$ be such that $p \nmid m.$ Then for $1 \leq r \leq k$ we have $g_{3}(p^k,p^r) = g_{3}(p^k,mp^r).$ We also have $g(p^k,1) = g(p^k,u)$ whenever $p \nmid u.$ This property actually partitions $GL_{3}(\Z_{p^{k}})$ in the following manner : \begin{align*}
    |GL_{3}(\Z_{p^{k}})| &= \sum\limits_{i=0}^k \varphi(p^i)\times g(p^k,p^{k-i}).
    \end{align*}
\end{enumerate} 
Since in this paper, we are dealing primarily with $3\times3$ matrices, from now on we will simply write $g(n,x)$ to denote $g_{3}(n,x).$ When we will talk specifically about $2\times2$ matrices, we will use the notation $g_{2}(n,x).$

We now elaborate  the organization of the paper. Our main goal in this paper is to compute $g(n,x)$. Since this function is multiplicative in $n,$ it is sufficient to know $g(p^k,x),$ where $p$ is a prime number and $k \in \N.$ Therefore the main result we will establish is the following.
\begin{theorem}\label{thm:th1}
Let $p$ be a prime. Let $k,x \in \N.$ Then
$$g(p^k,x) = \begin{cases} p^{8(k-1)}g(p,0) & \mbox{if } p\mid x, \\ \frac{p^{8(k-1)[|GL_3(\Z_p)| - g(p,0)]}}{p-1} & \mbox{otherwise.}
    \end{cases}$$
\end{theorem}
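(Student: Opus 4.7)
The plan is to argue by induction on $k$ via the reduction map $\pi\colon GL_3(\Z_{p^k})\to GL_3(\Z_{p^{k-1}})$, which has fibres of size $p^9$. The base case $k=1$ is immediate: the branch $p\mid x$ only concerns $x\equiv 0\pmod p$ and is a tautology, while for $p\nmid x$ the IP forces $g(p,x)$ to be the common value attained on the $p-1$ residues coprime to $p$, whose sum must equal $|GL_3(\Z_p)|-g(p,0)$; this gives $g(p,x)=(|GL_3(\Z_p)|-g(p,0))/(p-1)$.

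For the inductive step, fix $M'\in GL_3(\Z_{p^{k-1}})$ and write each of its $p^9$ lifts as $M=M'+p^{k-1}N$ with $N\in M_3(\F_p)$; every such $M$ automatically lies in $GL_3(\Z_{p^k})$, since $M\bmod p=M'\bmod p$. Because the permanent is a polynomial of total degree three in the entries,
\[
\operatorname{perm}(M)=\operatorname{perm}(M')+p^{k-1}L(N)+p^{2(k-1)}Q(N)+p^{3(k-1)}\operatorname{perm}(N),
\]
where $L(N)=\sum_{i,j}N_{ij}\operatorname{perm}(\bar M[i\,|\,j])$ is the linear form whose coefficients are the nine $2\times 2$ permanents of minors of $\bar M:=M'\bmod p$. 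For $k\ge 2$ the $p^{2(k-1)}$ and $p^{3(k-1)}$ contributions vanish mod $p^k$, so $\operatorname{perm}(M)\equiv \operatorname{perm}(M')+p^{k-1}L(N)\pmod{p^k}$.

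Provided $L\not\equiv 0\pmod p$, the form $L\colon M_3(\F_p)\to\F_p$ is surjective with fibres of size $p^8$, so among the $p^9$ lifts of $M'$ each of the $p$ residues of $\Z_{p^k}$ lying over $\operatorname{perm}(M')\bmod p^{k-1}$ is attained by $\operatorname{perm}(M)\bmod p^k$ exactly $p^8$ times, while no residue outside that coset is attained. Summing over $M'$ yields the recursion $g(p^k,x)=p^8\, g(p^{k-1},x\bmod p^{k-1})$; since $v_p(x\bmod p^{k-1})$ and $v_p(x)$ lie on the same side of the dichotomy $p\mid x$ versus $p\nmid x$, the inductive hypothesis then delivers the claimed closed form.

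The main obstacle is therefore the lemma that $L\not\equiv 0\pmod p$ for every $\bar M\in GL_3(\F_p)$, equivalently that not all nine $2\times 2$ permanental minors of an invertible mod-$p$ matrix vanish. When $p=2$ the permanent coincides with the determinant, so the statement collapses to the familiar fact that $\bar M$ of rank $\le 1$ is singular. For odd $p$ my plan is a direct case analysis on the nine relations $a_{i_1 j_1}a_{i_2 j_2}+a_{i_1 j_2}a_{i_2 j_1}\equiv 0\pmod p$: if $a_{11}\not\equiv 0$ then the four equations $P_{22}=P_{23}=P_{32}=P_{33}=0$ express $a_{22},a_{23},a_{32},a_{33}$ as rational functions of $a_{11},a_{12},a_{13},a_{21},a_{31}$, and substituting into $P_{11}=0$ forces $2a_{12}a_{13}a_{21}a_{31}\equiv 0\pmod p$; each of the four resulting subcases collapses an entire row or column of $\bar M$ to zero. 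The case $a_{11}=0$, and every symmetric variant, is handled by first permuting rows and columns to bring a non-zero entry into position $(1,1)$, using the invariance of $\operatorname{perm}$ under row and column permutations, and then rerunning the same argument.
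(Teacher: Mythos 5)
Your proof is correct, but it takes a genuinely different route from the paper. The paper never touches a one-step lifting argument: it first proves that $g(p^k,x)$ takes only two values (Theorem~\ref{thm:th2}) via the IP together with an explicit bijection $G(p^k,0,i,j)\to G(p^k,x,i,j)$ that shifts a single entry by $x/P_{ij}$; this bijection rests on Lemma~\ref{lem:Gpkoij}, which shows (via the identity $2a_{22}P_{22}-a_{11}P_{11}+a_{12}P_{12}-2a_{21}P_{21}-3a_{13}P_{13}=\det(A)-6a_{13}a_{21}a_{32}$) that the five specific cofactors $P_{11},P_{12},P_{13},P_{21},P_{22}$ cannot all vanish mod $p$ for an invertible matrix. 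It then obtains $g(p^k,0)=p^{8(k-1)}g(p,0)$ in one jump, by noting that the reduction-mod-$p$ preimage of $G(p,0)$ is exactly $\bigcup_{p\mid x}G(p^k,x)$, of size $p^{k-1}g(p^k,0)$ by Theorem~\ref{thm:th2} on one hand and $p^{9(k-1)}g(p,0)$ on the other; the unit case follows by complementary counting in $|GL_3(\Z_{p^k})|$. Your argument instead descends one level at a time and proves the stronger local statement that over each $M'\in GL_3(\Z_{p^{k-1}})$ the permanent equidistributes among the $p$ residues of $\Z_{p^k}$ lying over $\operatorname{perm}(M')$, with multiplicity $p^8$; this yields the recursion $g(p^k,x)=p^8\,g(p^{k-1},x\bmod p^{k-1})$ for \emph{every} $x$ simultaneously, relegates the IP to the base case $k=1$, and needs only the weaker lemma that not all \emph{nine} permanental cofactors of an invertible matrix vanish mod $p$ (which you prove by a clean elimination: $P_{22}=P_{23}=P_{32}=P_{33}=0$ with $a_{11}\neq 0$ forces $P_{11}=2a_{12}a_{13}a_{21}a_{31}/a_{11}^2$, and each vanishing factor kills a full row or column; I checked this computation and the $p=2$ and permutation reductions, and they are sound). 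What the paper's route buys is Theorem~\ref{thm:th2} itself and the finer partition into the classes $G(p^k,x,i,j)$, which are reused in Section~\ref{subsec:gpoij}; what your route buys is a shorter, self-contained, Hensel-style proof of Theorem~\ref{thm:th1} alone. One small remark: the displayed formula in the statement has a typographical slip (the bracket $[|GL_3(\Z_p)|-g(p,0)]$ has migrated into the exponent); your derivation, like the paper's own in Section~\ref{sec:computation}, gives the intended value $p^{8(k-1)}\bigl[|GL_3(\Z_p)|-g(p,0)\bigr]/(p-1)$.
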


These values look arbitrary at first but they are an extension of the analogous result in \cite{A:S}.
\begin{theorem} \label{thm:th2}
Let $p$ be a prime. Let $k,x \in \N.$ Then $g(p^k,x)$ assumes only two distinct values. Specifically,
    $$g(p^k,x) = \begin{cases} g(p^k,0) & \mbox{if } p\mid x, \\ g(p^k,1) & \mbox{otherwise.}
    \end{cases}$$
\end{theorem}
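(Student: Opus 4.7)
The plan is to derive Theorem~\ref{thm:th2} as an immediate corollary of Theorem~\ref{thm:th1}. The key observation is that the right-hand side of the formula in Theorem~\ref{thm:th1} depends on $x$ only through the binary condition $p\mid x$ versus $p\nmid x$: both branches are expressions involving only $p$, $k$, $g(p,0)$, and $|GL_3(\Z_p)|$, with no further explicit dependence on $x$. Once Theorem~\ref{thm:th1} is established, Theorem~\ref{thm:th2} reduces to a matter of substitution at two reference points.

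Concretely, I would first evaluate the formula at $x=0$ and $x=1$. Substituting $x=0$, which satisfies $p\mid 0$, into the first branch of Theorem~\ref{thm:th1} yields $g(p^k,0) = p^{8(k-1)}\,g(p,0)$; substituting $x=1$, which satisfies $p\nmid 1$, into the second branch yields $g(p^k,1) = \frac{p^{8(k-1)}[|GL_3(\Z_p)|-g(p,0)]}{p-1}$. Comparing these with the same formula applied to an arbitrary $x$, one reads off immediately that $g(p^k,x)=g(p^k,0)$ whenever $p\mid x$ and $g(p^k,x)=g(p^k,1)$ whenever $p\nmid x$, which is exactly the two-value dichotomy asserted by Theorem~\ref{thm:th2}. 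Because no further case analysis is needed, this portion of the argument is essentially bookkeeping.

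The real content therefore lies inside Theorem~\ref{thm:th1}, and it is worth noting which piece of Theorem~\ref{thm:th2} is genuinely new relative to the facts listed in the introduction. The Invariance Property (IP) already implies, for each fixed $r$, that $g(p^k,p^r)=g(p^k,mp^r)$ for every unit $m$, and similarly $g(p^k,1)=g(p^k,u)$ for every unit $u$. What IP does not give is independence from $r$: the non-trivial claim absorbed into the first branch of Theorem~\ref{thm:th1} is that $g(p^k,p)=g(p^k,p^2)=\cdots=g(p^k,0)$. The main obstacle in the background is thus establishing this collapse, presumably by a fibering or lifting argument that matches matrices in $GL_3(\Z_{p^k})$ with fixed permanent to those in $GL_3(\Z_p)$, explaining the uniform factor $p^{8(k-1)}$ that appears in both branches of Theorem~\ref{thm:th1}. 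Given that grunt work, Theorem~\ref{thm:th2} drops out as described above.
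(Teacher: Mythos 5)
Your proposal is circular with respect to the paper's logical structure, and the circularity is not repairable by the sketch you give. In the paper, Theorem~\ref{thm:th1} is \emph{derived from} Theorem~\ref{thm:th2}: Section~\ref{sec:range} proves Theorem~\ref{thm:th2} first, Section~\ref{sec:computation} then uses it to get Theorem~\ref{thm:th3} (namely $g(p^k,0)=p^{8(k-1)}g(p,0)$), and Theorem~\ref{thm:th1} follows by combining that with the counting identity $|GL_3(\Z_{p^k})|=p^{k-1}g(p^k,0)+(p^k-p^{k-1})g(p^k,1)$ --- an identity which itself already presupposes the two-value dichotomy. The lifting map $\pi_{p^k}:G_{p^k}\to G(p,0)$ that you gesture at (``a fibering or lifting argument'') only gives the aggregate count $\sum_{p\mid x}g(p^k,x)=p^{9(k-1)}g(p,0)$; to extract $g(p^k,0)$ from that sum you must already know that all the summands $g(p^k,x)$ with $p\mid x$ are equal, which is precisely the nontrivial half of Theorem~\ref{thm:th2}. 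So deducing Theorem~\ref{thm:th2} from Theorem~\ref{thm:th1} begs the question.

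The content you correctly identify as ``the non-trivial claim'' --- that $g(p^k,0)=g(p^k,p)=g(p^k,p^2)=\cdots$, i.e.\ independence from the exact power of $p$ dividing $x$ --- is exactly what you leave unproved. The paper's argument for it (Lemma~\ref{lem:0x}) is a direct bijection, not a consequence of any counting formula: one partitions $G(p^k,x)$ into classes $G(p^k,x,i,j)$ according to the first $2\times 2$ sub-permanent $P_{ij}$ (in a fixed ordering) that is a unit mod $p$, proves via Lemma~\ref{lem:Gpkoij} that only the five classes indexed by $(1,1),(1,2),(1,3),(2,1),(2,2)$ can be non-empty (an invertible matrix cannot have all of $P_{11},P_{12},P_{13},P_{21},P_{22}$ divisible by $p$), and then on each class defines the bijection that adds $xP_{ij}^{-1}$ to the entry $a_{ij}$. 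This shifts the permanent by exactly $x$ while shifting the determinant by a multiple of $p$ (since $p\mid x$ and $P_{ij}$ is a unit), so invertibility is preserved. None of this machinery, nor any substitute for it, appears in your proposal; you would need to supply it, or an independent proof of Theorem~\ref{thm:th1} that does not route through Theorem~\ref{thm:th2}, for the argument to stand.
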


This looks similar to the IP but it has much more. For example,  from IP we cannot get $g(p^4,p) = g(p^4,2p^2),$ where $p$ is an odd prime number but one can by using Theorem~\ref{thm:th2}. Further knowing $g(p^k,0)$ is sufficient to determine $g(p^k,1).$  Since $\forall i, 1\leq i \leq k$ we have $g(p^k,0) = g(p^k,p^i)$, and the telescoping sum :
\begin{align*}
 |GL_{3}(\Z_{p^{k}})| &=  g(p^k,0) + \sum\limits_{i=1}^{k-1} (p^i - p^{i-1})\times g(p^k,0) + \varphi(p^k)\times g(p^k,1) \\ 
 &=  p^{k-1}\times g(p^k,0) + (p^k - p^{k-1})\times g(p^k,1).
\end{align*}

Since $|GL_{3}(\Z_{p^{k}})| = p^{9(k-1)}|GL_{3}(\Z_{p})| = p^{9(k-1)}(p^3-1)(p^3-p)(p^3-p^2),$ once we know $g(p^k,0)$ we can also evaluate $g(p^k,1).$ To do that, we will establish the following relation : 
\begin{theorem}\label{thm:th3}
Let $p$ be a prime and $k\in \N.$ Then   $g(p^k,0) = p^{8(k-1)}g(p,0).$
\end{theorem}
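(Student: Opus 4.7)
The plan is to exploit the mod-$p$ reduction map $\pi:GL_3(\Z_{p^k})\to GL_3(\Z_p)$, which is surjective with every fiber of size $p^{9(k-1)}$. Since reduction commutes with the permanent, $\pi$ carries $G_3(p^k,0)$ into $G_3(p,0)$, so
\[ g(p^k,0) \;=\; \sum_{\bar M\in G_3(p,0)} N(\bar M), \qquad N(\bar M):=|\pi^{-1}(\bar M)\cap G_3(p^k,0)|. \]
It therefore suffices to show $N(\bar M)=p^{8(k-1)}$ for every $\bar M\in G_3(p,0)$.

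Fix such $\bar M$ and a lift $M_0\in\pi^{-1}(\bar M)$; every other lift has the form $M_0+pN$ for a unique $N\in M_3(\Z_{p^{k-1}})$. Because $perm$ is a degree-$3$ polynomial that is linear in each matrix entry, Taylor expansion gives
\[ perm(M_0+pN) \;=\; perm(M_0) \;+\; p\sum_{i,j}P_{ij}N_{ij} \;+\; p^2 Q(N) \;+\; p^3\, perm(N), \]
where $P_{ij}:=perm(M_{0,(i,j)})$ is the $(i,j)$-sub-permanent of $M_0$ and $Q$ is a quadratic form in $N$. Since $p\mid perm(M_0)$, the condition $perm(M_0+pN)\equiv 0\pmod{p^k}$ is equivalent, after division by $p$, to a congruence modulo $p^{k-1}$ whose linear part in $N$ has coefficients $P_{ij}\pmod p$.

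The crux is the Key Lemma: \emph{for every $\bar M\in GL_3(\F_p)$, at least one sub-permanent $P_{ij}(\bar M)\in\F_p$ is nonzero.} Granting it, pick $(i_0,j_0)$ with $P_{i_0j_0}$ a unit mod $p$. Since $perm$ is entrywise linear, $perm(M_0+pN)$ is affine in $N_{i_0j_0}$ with leading coefficient $p\cdot perm((M_0+pN)_{(i_0,j_0)})=p(P_{i_0j_0}+O(p))$, so after division by $p$ the leading coefficient is a unit in $\Z_{p^{k-1}}$. Hence for each of the $(p^{k-1})^8$ choices of the remaining entries of $N$, there is exactly one $N_{i_0j_0}\in\Z_{p^{k-1}}$ solving the congruence, giving $N(\bar M)=p^{8(k-1)}$. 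Summing over $G_3(p,0)$ yields $g(p^k,0)=p^{8(k-1)}g(p,0)$.

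The main obstacle is the Key Lemma. For $p=2$ sub-permanents coincide with sub-determinants in characteristic $2$, so all nine vanishing would force $\bar M$ to have rank $\le 1$, contradicting invertibility. For odd $p$, I would analyze the three sub-permanent conditions on any pair of rows $(r,r')=((a,b,c),(d,e,f))$, namely $bf+ce=af+cd=ae+bd=0$; this is a linear system in $(d,e,f)$ with coefficient determinant $2abc$. If some row of $\bar M$ has all entries nonzero, the other two rows must vanish, contradicting $\bar M\in GL_3$. Otherwise every row has a zero entry, the pairwise kernels are one-dimensional (each spanned by a sign-flipped scalar multiple of the other row), and a short case analysis on zero patterns forces either a vanishing row, a common zero column of $\bar M$, or all three rows in a single $1$-dimensional subspace — each contradicting invertibility. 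This establishes the lemma and completes the proof.
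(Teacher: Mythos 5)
Your proof is correct, but it takes a genuinely different route from the paper. The paper first proves (Lemma~\ref{lem:0x}/Theorem~\ref{thm:th2}) that $g(p^k,x)=g(p^k,0)$ for all $x$ with $p\mid x$, so that the union $G_{p^k}=\bigcup_{p\mid x}G(p^k,x)$ has cardinality $p^{k-1}g(p^k,0)$; it then observes that $G_{p^k}$ is exactly the full preimage of $G(p,0)$ under reduction mod $p$, each fiber having $p^{9(k-1)}$ elements, and divides. You instead fix the permanent to be $0$ modulo $p^k$ from the start and count, inside each fiber over $\bar M\in G(p,0)$, the lifts satisfying that exact congruence by a Hensel-type argument: since $\operatorname{perm}$ is affine in each entry and some sub-permanent of $\bar M$ is a unit, one entry of the perturbation $N$ is determined modulo $p^{k-1}$ by the other eight, giving $p^{8(k-1)}$ per fiber. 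This bypasses Theorem~\ref{thm:th2} entirely (and in fact re-proves its ``$p\mid x$'' half, since the same affine equation has a unique solution for any target $x\equiv 0\pmod p$), which is a real structural simplification. Your Key Lemma is precisely the content of the paper's Lemma~\ref{lem:Gpkoij} (the paper even shows the stronger statement that one of the five sub-permanents $P_{11},P_{12},P_{13},P_{21},P_{22}$ must be a unit, via an explicit linear identity), so you could simply cite it; your alternative derivation via the row-pair system with coefficient determinant $2abc$ is sound and the concluding case analysis, while only sketched, closes in a few lines (a row with one zero forces the other two rows into a common line; a row with two zeros forces two zero columns). The one point to write out carefully is that sketch, and the separate $p=2$ case, if you do not cite Lemma~\ref{lem:Gpkoij}.
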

Thus the paper boils down to computing $g(p,0).$ It is clear that that $g(2,0) = 0.$ The following result will be proved in the last section~\ref{sec:g(p,0)}.

\begin{theorem} \label{thm:th4}
Let $p$ be an odd prime. Then 
    $$g(p,0) = \begin{cases} p(p-1)^4[(p+1)^3 + 1] & \mbox{if } (p-3) \  \mbox{is a quadratic residue modulo  $p,$} \\ p^2(p-1)^4(p^2 + 3p + 5) & \mbox{otherwise.}\\
    \end{cases}$$
\end{theorem}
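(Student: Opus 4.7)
The plan is to fix the second and third rows $\mathbf{r}_2 = (x_1, x_2, x_3)$ and $\mathbf{r}_3 = (y_1, y_2, y_3)$ of $M$ and count the admissible first rows. Introduce the permanental cross product $\mathbf{p}$ with components $p_i = x_j y_k + x_k y_j$ (for $\{i,j,k\} = \{1,2,3\}$) and the usual cross product $\mathbf{d} = \mathbf{r}_2 \times \mathbf{r}_3$. Then $\operatorname{perm}(M) = \mathbf{r}_1 \cdot \mathbf{p}$ and $\det(M) = \mathbf{r}_1 \cdot \mathbf{d}$. For each fixed pair $(\mathbf{r}_2, \mathbf{r}_3)$, the number of $\mathbf{r}_1$'s with $\mathbf{r}_1 \cdot \mathbf{p} = 0 \ne \mathbf{r}_1 \cdot \mathbf{d}$ is $p^3 - p^2$ when $\mathbf{p} = \mathbf{0}$ and $\mathbf{d} \ne \mathbf{0}$; it is $p^2 - p$ when both are nonzero and linearly independent; and it is $0$ otherwise (a parallel pair defines a single hyperplane, while $\mathbf{d} = \mathbf{0}$ makes the non-vanishing condition unsatisfiable). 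Writing $N_1$ and $N_2$ for the counts of $(\mathbf{r}_2, \mathbf{r}_3)$ pairs in these two contributing cases,
\[
  g(p, 0) = N_1 (p^3 - p^2) + N_2 (p^2 - p).
\]

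The count $\#\{\mathbf{p} = \mathbf{0}\}$ is evaluated by a rank argument: the bilinear form $\mathbf{w} \cdot \mathbf{p} = \mathbf{r}_2^\top M_\mathbf{w}\, \mathbf{r}_3$ has matrix $M_\mathbf{w}$ symmetric with zero diagonal and $(i,j)$-entry $w_k$, with $\det M_\mathbf{w} = 2 w_1 w_2 w_3$; tallying $\mathbf{w}$ by rank gives $\#\{\mathbf{p} = \mathbf{0}\} = \sum_\mathbf{w} p^{3 - \operatorname{rank}(M_\mathbf{w})} = 5p^3 - 6p^2 + 3p - 1$. An analogous (simpler) argument with the skew-symmetric cross-product matrix yields $\#\{\mathbf{d} = \mathbf{0}\} = p^4 + p^3 - p$, and direct enumeration gives $\#\{\mathbf{p} = \mathbf{d} = \mathbf{0}\} = 2p^3 + 3p^2 - 6p + 2$. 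Therefore $N_1 = 3(p-1)^3$ and, by inclusion-exclusion, $\#\{\mathbf{p}, \mathbf{d} \ne \mathbf{0}\} = p^6 - p^4 - 4p^3 + 9p^2 - 8p + 3$.

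The crux is the evaluation of $N_2' := \#\{(\mathbf{r}_2, \mathbf{r}_3) : \mathbf{p}, \mathbf{d} \ne \mathbf{0}, \mathbf{p} \parallel \mathbf{d}\}$, so that $N_2 = \#\{\mathbf{p}, \mathbf{d} \ne \mathbf{0}\} - N_2'$. Writing $\mathbf{p} = \lambda \mathbf{d}$ and taking the ratios of the three componentwise equations in the generic case (all $x_i, y_j$ nonzero), the product of the three ratios equals $1$, which forces $\mu^3 = 1$ where $\mu = -(1 + \lambda)/(1 - \lambda)$. The root $\mu = 1$ is impossible for odd $p$, while the nontrivial cube roots give $\lambda^2 = -1/3$, which is solvable in $\mathbb{F}_p$ precisely when $-3$ (equivalently, $p - 3$) is a quadratic residue modulo $p$, i.e., $p \equiv 1 \pmod{3}$. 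In that case each $\mu \in \{\omega, \omega^2\}$ parameterizes $(p - 1)^4$ generic solution pairs via $y_j = \mu^{j-1} x_j y_1 / x_1$ with $(x_1, x_2, x_3, y_1) \in (\mathbb{F}_p^*)^4$, contributing $2(p-1)^4$ in total. The degenerate (zero-entry) contributions to $N_2'$ come only from the patterns $x_i = y_i = 0$ for some $i$: any other zero pattern produces incompatible constraints forcing $\lambda$ to equal both $1$ and $-1$. For each such $i$, counting $(x_j, x_k, y_j, y_k)$ with $x_j y_k \ne \pm x_k y_j$ by standard $2 \times 2$ singular-matrix enumeration gives $(p-1)^2(p^2 + 1)$, totaling $3(p-1)^2(p^2 + 1)$. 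Substituting back into $g(p, 0)$ and simplifying yields $p^2(p-1)^4(p^2 + 3p + 5)$ in the non-QR case, while in the QR case the additional contribution $-2(p-1)^4 \cdot p(p - 1) = -2p(p-1)^5$ rearranges into $p(p-1)^4[(p+1)^3 + 1]$. The main obstacle is the case analysis of $N_2'$: one must verify that the cubic condition $\mu^3 = 1$ cleanly encodes the $-3$-QR dichotomy, and that the only degenerate zero pattern contributing is $x_i = y_i = 0$.
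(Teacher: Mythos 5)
Your proposal is correct --- I checked the intermediate counts ($\#\{\mathbf{p}=\mathbf{0}\}=5p^3-6p^2+3p-1$, $\#\{\mathbf{p}=\mathbf{d}=\mathbf{0}\}=2p^3+3p^2-6p+2$, $N_1=3(p-1)^3$, the degenerate contribution $(p-1)^2(p^2+1)$ per matching zero pair, and the generic parallel contribution $2(p-1)^4$ in the QR case) and the final assembly reproduces both stated values, e.g.\ at $p=5$ one gets $192\cdot 100+13440\cdot 20=288000$. The route is genuinely different in organization from the paper's. The paper proceeds by an explicit case analysis on the zero patterns of the first two rows (a table of seven cases, each counted by hand), whereas you fix two rows once and for all, package the permanental and determinantal cofactors into two vectors $\mathbf{p}$ and $\mathbf{d}$, and observe that the number of admissible remaining rows depends only on whether $(\mathbf{p},\mathbf{d})$ is (zero, nonzero), independent, or degenerate; the global tallies then come from a character-sum/rank argument and inclusion--exclusion rather than from enumeration of zero patterns. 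This buys a cleaner and less error-prone bookkeeping (the paper itself records an initial miscount at $p=7,13$ that had to be patched), and it isolates the genuinely hard step as the single question of when $\mathbf{p}\parallel\mathbf{d}$. That hard step is resolved by essentially the same computation as the paper's ``role of quadratic residues'' subsection: your condition $\mu^3=1$ with $\mu\neq 1$ is the paper's $\beta^3=(b\gamma/c)^3$ with $\beta\neq b\gamma/c$, and both reduce to $-3$ (equivalently $p-3$) being a quadratic residue, i.e.\ $p\equiv 1\pmod 3$. The one place where you are terser than you should be is the claim that the only degenerate zero patterns contributing to $N_2'$ are $x_i=y_i=0$; this is true, but it requires checking that every other pattern either forces two components of $\mathbf{d}$ to be nonzero with ratios $p_j/d_j=1$ and $p_k/d_k=-1$, or collapses into a matching pair or into $\mathbf{d}=\mathbf{0}$ --- a finite verification worth writing out.
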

We will dedicate the Section~\ref{sec:range} to the proof of Theorem~\ref{thm:th2}. In Section~\ref{sec:computation} we derive the proof of Theorem~\ref{thm:th3} using Theorem~\ref{thm:th2}. Theorem~\ref{thm:th1} will  follow immediately. In  Section~\ref{sec:g(p,0)} we prove Theorem~\ref{thm:th4} in two different ways.

\section{The range of $g(p^k,x)$}\label{sec:range}
In this section we  prove Theorem~\ref{thm:th2}. We will start by showing that $g(p^k,0) = g(p^k,p^r)$ whenever $1\leq r\leq k.$ That coupled with IP will prove Theorem~\ref{thm:th2}. For this section, we introduce some notation. Let $P_{ij}$ be  the permanent of  $2\times 2$ submatrix obtained by deleting row $i$ and column $j.$ Given a prime $p$, $k, x \in \N,$ $1 \leq i,j \leq 3,$ define the following sets :
\begin{itemize}

    \item $G(p^k,x,1,1) = \{A \in G(p^k,x) \mid p\nmid P_{11} \}$
    \item $G(p^k,x,1,2) = \{A \in G(p^k,x) \mid p\mid P_{11}, p\nmid P_{12} \}$ 
    \item $G(p^k,x,1,3) = \{A \in G(p^k,x) \mid p\mid P_{11}, P_{12} , p \nmid P_{13} \}$
    \item $G(p^k,x,2,1) = \{A \in G(p^k,x) \mid p\mid P_{11}, P_{12}, P_{13}, p \nmid P_{21} \}$
    \item $G(p^k,x,2,2) = \{A \in G(p^k,x) \mid p\mid P_{11}, P_{12}, P_{13}, P_{21}, p \nmid P_{22} \}$
    \end{itemize}
In a similar way, one can define $G(p^k,x,2,3), G(p^k,x,3,1), G(p^k,x,3,2)$ and $G(p^k,x,3,3).$

An interesting observation is that for an odd prime $p$ and $p\mid x,$ the sets $$G(p^k,x,1,1), G(p^k,x,1,2), \\ G(p^k,x,1,3), G(p^k,x,2,1) \;\mbox{and} \;G(p^k,x,2,2)$$ are always non-empty, containing the matrices 
$$\begin{pmatrix}
\frac{x -1}{2} & \frac{x + 1}{2} & 0 \\
1 & 1 & 0 \\
0 & 0 & 1
\end{pmatrix} ,
\begin{pmatrix}
1 & 0 & 0\\
1 & 1 & 1 \\
0 & 1 & x - 1
\end{pmatrix}, \begin{pmatrix}
1 & 0 & 0\\
(x - 1)^{-1} & 1 & 1 \\
x -1 & 1 & x - 1
\end{pmatrix}, \begin{pmatrix}
0 & 0 & 1\\
1 & 1 & 0 \\
x - 1 & 1 & 0
\end{pmatrix}, \begin{pmatrix}
1 & 1 & 1\\
0 & 1 & 1 \\
0 & 1 & x - 1
\end{pmatrix}$$ respectively. Can we assure the non-emptiness of the remaining four sets, namely $$G(p^k,x,2,3),  G(p^k,x,3,1), G(p^k,x,3,2)\;\mbox{and}\;G(p^k,x,3,3)?$$ The following lemma shows that it is not possible, even when $x$ is not divisible by $p.$
\begin{lemma}\label{lem:Gpkoij}
Let $p$ be a prime number. Let $k, x \in \N.$  Then the sets $$G(p^k,x,2,3), G(p^k,x,3,1), G(p^k,x,3,2) \;\mbox{and}\; \; G(p^k,x,3,3)$$ are empty sets. 
\end{lemma}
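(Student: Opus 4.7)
The plan is to derive, from the divisibility hypotheses defining each of the four sets, a contradiction with the invertibility of $A$ modulo $p$. All four sets include the condition $p \mid P_{11}, P_{12}, P_{13}$, which is the primary lever.

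When $p = 2$, this is immediately fatal. Since a $2 \times 2$ permanent agrees with the corresponding $2 \times 2$ determinant modulo $2$, the cofactor expansion along the first row gives $\det(A) \equiv a_{11} P_{11} + a_{12} P_{12} + a_{13} P_{13} \equiv 0 \pmod{2}$ as soon as $2 \mid P_{11}, P_{12}, P_{13}$, contradicting $A \in GL_3(\Z_{2^k})$. So the four sets are empty for $p = 2$, and from here on I would take $p$ odd.

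The engine for odd $p$ is the following auxiliary lemma, which I would prove first: \emph{if $B$ is a $2 \times 3$ matrix over $\F_p$ with $p$ odd and its three $2 \times 2$ permanents (obtained by deleting one column) all vanish, then $B$ has a zero row or a zero column}. Both the hypothesis and the conclusion are invariant under permutations of the rows and columns of $B$, so we may assume $b_{11} \neq 0$; solving the equations from deleting columns $3$ and $2$ for $b_{22}$ and $b_{23}$ and substituting into the third yields $-2 b_{12} b_{13} b_{21}/b_{11} \equiv 0 \pmod p$, which (since $p$ is odd) forces one of $b_{12}, b_{13}, b_{21}$ to vanish; each subcase gives the claimed zero row or column.

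Apply the lemma to the submatrix $B_1$ of $A$ consisting of rows $2$ and $3$: a zero row of $B_1$ is a zero row of $A$, contradicting invertibility, so $B_1$ has a zero column at some $j_1 \in \{1, 2, 3\}$, i.e.\ $a_{2 j_1} = a_{3 j_1} = 0$. For the three sets $G(p^k, x, 3, 1), G(p^k, x, 3, 2), G(p^k, x, 3, 3)$, the lemma also applies to the submatrix $B_2$ of rows $1$ and $3$, giving a zero column at $j_2$; the case $j_1 = j_2$ forces a zero column of $A$, while the six ordered pairs $(j_1, j_2)$ with $j_1 \neq j_2$ are each handled by a one-line check that exhibits some $P_{1j}$ as a nonzero product of nonzero entries of $A$ modulo $p$, contradicting the hypothesis. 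For the fourth set $G(p^k, x, 2, 3)$ the lemma is not available for $B_2$ since $p \nmid P_{23}$; instead, a direct case-split on $j_1$ combines the two zeros from the zero column of $B_1$ with $P_{21} \equiv P_{22} \equiv 0 \pmod p$ to propagate further zeros through $A$, in every case either zeroing out a row or column of $A$ or forcing $P_{23} \equiv 0$, contradicting the hypothesis. The main obstacle is bookkeeping in these case analyses; the conceptual backbone is the $2 \times 3$ permanent lemma.
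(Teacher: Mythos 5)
Your proof is correct, but it takes a genuinely different route from the paper's. The paper's argument rests on the single polynomial identity $2a_{22}P_{22} - a_{11}P_{11} + a_{12}P_{12} - 2a_{21}P_{21} - 3a_{13}P_{13} = \det(A) - 6a_{13}a_{21}a_{32}$: since all four sets impose $p\mid P_{11},P_{12},P_{13},P_{21},P_{22}$, this gives $\det(A)\equiv 6a_{13}a_{21}a_{32}\pmod p$, which kills $p=2$ and $p=3$ instantly, and for $p\ge 5$ the paper assumes $a_{13},a_{21},a_{32}$ are all units and runs a chain of substitutions through the $P_{ij}\equiv 0$ relations to force $p\mid\det(A)$ anyway. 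Your approach replaces the identity with the structural lemma that a $2\times 3$ matrix over $\F_p$ ($p$ odd) whose three $2\times 2$ permanents vanish must have a zero row or zero column; I checked the lemma (the computation $-2b_{12}b_{13}b_{21}b_{11}^{-1}\equiv 0$ is right, with the trivial case $B=0$ folded in) and spot-checked your case analyses for $j_1=j_2$, for $j_1\ne j_2$, and for all three values of $j_1$ in the $G(p^k,x,2,3)$ case — each does terminate in a zero row, a zero column, proportional rows, or a contradiction with some $P_{ij}$ hypothesis. What each buys: your lemma is reusable, treats $p=3$ uniformly with all odd primes, and makes the mechanism transparent (vanishing permanental minors of two rows force a common zero column), at the cost of more case bookkeeping; the paper's identity is a one-line verification that dispatches $p=2,3$ for free and proves the slightly stronger fact that the five divisibility conditions alone (without invoking $p\nmid P_{23}$) already force singularity, covering all four sets in one stroke. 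One small wording caution: in your $j_1\ne j_2$ cases the relevant $P_{1j}$ is not \emph{a priori} a product of nonzero entries — you must first use invertibility to rule out a zero or repeated-direction row before declaring the product nonzero — but that is exactly the one-line check you describe, so the argument stands.
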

\begin{proof}
We will show that if  $A=[a_{ij}]$ with $p$ dividing $P_{11}, P_{12}, P_{13}, P_{21}$ and $P_{22}$ then $A$ will fail to be invertible.   It is easy to verify the following identity.
    $$2a_{22}P_{22} - a_{11}P_{11} + a_{12}P_{12} - 2a_{21}P_{21} - 3a_{13}P_{13} = \det(A) - 6a_{13}a_{21}a_{32}.$$

At this stage, the lemma is already proved for $p=2$ and $p=3.$  We give the proof for the remaining primes.  Suppose that $p$ had divided any one of $a_{13}, a_{21} $ or $a_{32}.$ Then we would be done.

We again assume the contrary, {\it i.e.,} suppose ${a_{13}}^{-1}, {a_{21}}^{-1} $ and ${a_{32}}^{-1}$ exist. Since $$p|P_{11} = a_{22}a_{33} + a_{23}a_{32},\;\;\;a_{23} \equiv a_{22}a_{33}a_{32}^{-1}\pmod p.$$ We substitute this into $P_{12} = a_{21}a_{33} + a_{23}a_{31}\equiv 0 \pmod p$  and we get  $ a_{33}(a_{21}a_{32} - a_{22}a_{31}) \equiv 0 \pmod{p}.$ 
Suppose $p \mid a_{33}.$ Then from $p|P_{11},$ we see that $p \mid a_{23}$ and from $p|P_{21},$ we get $p \mid a_{13}.$ Therefore $p$ divides the third column and thus $A$ cannot be invertible. Thus the only option is that $p \mid (a_{21}a_{32} - a_{22}a_{31}).$ With a similar argument we can show that  $p \mid a_{31}(a_{23}a_{32} - a_{22}a_{33}).$ Finally we are left with \begin{center}
    $p \mid (a_{21}a_{32} - a_{22}a_{31})$ and $p \mid (a_{22}a_{33} - a_{23}a_{32}).$
\end{center}
Multiplying the first equation by $a_{33}$ and the second by $a_{31}$ and subtracting the second from the second we arrive at $p \mid a_{32}(a_{21}a_{33} - a_{23}a_{31}).$ Since $p\nmid a_{32},$ we have $p \mid (a_{21}a_{33} - a_{23}a_{31})$ and hence $p \mid \det(A), $ a contradiction. 
\end{proof}

Theorem~\ref{thm:th5} in Section~\ref{sec:g(p,0)} provides exact values of $g(p,0,i,j).$ 

\begin{lemma}\label{lem:0x}
Let $p$ be a prime number. Let $k, x \in \N$ be such that $p\mid x.$ Then $g(p^k,0) = g(p^k,x).$
\end{lemma}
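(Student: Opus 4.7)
The plan is to use the partition of $G(p^k,x)$ provided by Lemma~\ref{lem:Gpkoij}: for every $y$, the only possibly non-empty sets among $G(p^k,y,i,j)$ are those with $(i,j) \in \{(1,1),(1,2),(1,3),(2,1),(2,2)\}$, and these partition $G(p^k,y)$ by definition. For each such $(i,j)$ I will construct an explicit bijection $\phi_{ij} : G(p^k,0,i,j) \to G(p^k,x,i,j)$ that modifies only the entry in position $(i,j)$; summing the resulting cardinality equalities over the five indices then yields the lemma.

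Given $A = [a_{mn}] \in G(p^k,0,i,j)$, the defining condition $p \nmid P_{ij}$ makes $P_{ij}$ a unit in $\Z_{p^k}$, so $t = x P_{ij}^{-1}$ is a well-defined element of $\Z_{p^k}$ divisible by $p$ (using $p \mid x$). Set $\phi_{ij}(A) = A + tE_{ij}$, where $E_{ij}$ is the matrix with a $1$ in entry $(i,j)$ and zeros elsewhere. Since $a_{ij}$ does not appear in the submatrix defining $P_{ij}$, we have $P_{ij}(\phi_{ij}(A)) = P_{ij}(A)$, which makes $B \mapsto B - x P_{ij}(B)^{-1} E_{ij}$ a genuine two-sided inverse of $\phi_{ij}$, provided both maps land in their claimed codomains.

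Three verifications establish the codomain claim. First, expanding the permanent along row $i$ gives $perm(\phi_{ij}(A)) = perm(A) + t P_{ij} = 0 + x \equiv x \pmod{p^k}$. Second, by multilinearity of $\det$ in the $i$th row, $\det(\phi_{ij}(A)) = \det(A) \pm t M_{ij}$ for the $(i,j)$-minor $M_{ij}$; since $p \mid t$, this is $\equiv \det(A) \pmod p$ and hence still a unit of $\Z_{p^k}$, so $\phi_{ij}(A) \in GL_3(\Z_{p^k})$. Third, every $P_{mn}$ is affine in $a_{ij}$ with coefficient an entry of $A$, so changing $a_{ij}$ by $t \equiv 0 \pmod p$ preserves each $P_{mn}$ modulo $p$; in particular the divisibility conditions defining $G(p^k,x,i,j)$ are preserved.

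There is no serious obstacle here. The entire argument reduces to the observation that a perturbation of a single entry of $A$ by a multiple of $p$ simultaneously preserves invertibility, preserves the mod-$p$ pattern of every $P_{mn}$, and shifts the permanent by the controllable quantity $t P_{ij}$; all three follow from multilinearity together with the fact that $P_{ij}$ is independent of $a_{ij}$. The only mildly delicate point is assembling the right unit $P_{ij}$ on each piece of the partition, which is exactly what Lemma~\ref{lem:Gpkoij} makes possible.
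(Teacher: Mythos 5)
Your proposal is correct and follows essentially the same route as the paper: partition each of $G(p^k,0)$ and $G(p^k,x)$ into the five non-empty pieces $G(p^k,\cdot,i,j)$ guaranteed by Lemma~\ref{lem:Gpkoij}, and on each piece shift the $(i,j)$ entry by $xP_{ij}^{-1}$ (a multiple of $p$) to move the permanent while preserving invertibility and the mod-$p$ pattern of the $P_{mn}$. The paper writes out only the $(1,1)$ case and asserts the rest are similar; your uniform treatment, including the explicit check that the defining divisibility conditions on the $P_{mn}$ are preserved, is a slightly more complete version of the same argument.
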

\begin{proof}
Instead of giving a single bijection from $G(p^k,0)$ to $G(p^k,x)$ we will give a bijection from $G(p^k,0,i,j)$ to $G(p^k,x,i,j).$ Since we will do this $\forall \ 1\leq i,j \leq 3,$ we would have in effect shown $g(p^k,0) = g(p^k,x) $ whenever $p\mid x.$

After having the Lemma~\ref{lem:Gpkoij}, we only need to prove the above for $$G(p^k,x,1,1), G(p^k,x,1,2), G(p^k,x,1,3), G(p^k,x,2,1)\;\mbox{and}\; G(p^k,x,2,2).$$ We prove it for $G(p^k,0,1,1).$ The remaining four can then be proved in a similar way. Define the map : \begin{center}
    $\psi_{11} : G(p^k,0,1,1) \to G(p^k,x,1,1) $ by $\begin{pmatrix}
    a_{11} & a_{12} & a_{13} \\
    a_{21} & a_{22} & a_{23} \\
    a_{31} & a_{32} & a_{33}
    \end{pmatrix} \mapsto \begin{pmatrix}
    a_{11} + \frac{x}{P_{11}}& a_{12} & a_{13} \\
    a_{21} & a_{22} & a_{23} \\
    a_{31} & a_{32} & a_{33}
    \end{pmatrix}.$
\end{center}
The important thing to see here is that the map $\psi_{11}$ preserves invertibility since we are translating the determinant by a multiple of $p.$ This would not be guaranteed if we had done a similar operation from $G(p^k,1,1,1)$ to $G(p^k,x,1,1),\ p\mid x.$
Now it is easy to check that $\psi_{11}$ is an injective map. For surjectivity, if $M =  \begin{pmatrix}
    b_{11} & b_{12} & b_{13} \\
    b_{21} & b_{22} & b_{23} \\
    b_{31} & b_{32} & b_{33}
    \end{pmatrix} \in G(p^k,x,1,1)$, then $\psi_{11}(N) = M,$ where

    $N = \begin{pmatrix}
    b_{11} - \frac{x}{P_{11}} & b_{12} & b_{13} \\
    b_{21} & b_{22} & b_{23} \\
    b_{31} & b_{32} & b_{33}
    \end{pmatrix} \in G(p^k,0,1,1).$ Consequently  the map $\psi_{11}$ is a bijection. Hence we have $|G(p^k,0,1,1)| = |G(p^k,x,1,1)|.$ 
In a similar way we can show 
$$|G(p^k,0,1,2)| = |G(p^k,x,1,2)|, |G(p^k,0,1,3)| = |G(p^k,x,1,3)|,$$ 
 $$|G(p^k,0,2,1)| = |G(p^k,x,2,1)| \; \mbox{and} \; |G(p^k,0,2,2)| = |G(p^k,x,2,2)|.$$ But since their disjoint unions are exactly $G(p^k,0)$ and $G(p^k,x)$  respectively,  
 thus  we have    $$g(p^k,0) = g(p^k, x), \;\;\mbox{whenever}\; p\mid x.$$
\end{proof}

Now the  proof of Theorem~\ref{thm:th2} can be obtained  from IP which gives $g(p^k,1) = g(p^k,y)$ when $p\nmid y$ and Lemma~\ref{lem:0x} which gives $g(p^k,0) = g(p^k,x)$ when $p\mid x.$

\section{Computing $g(p^k,x)$}\label{sec:computation}
In this section, our goal is to prove Theorem~\ref{thm:th3}.
In particular, from Theorem~\ref{thm:th2} it is sufficient  to find $g(p^k,0)$ and $g(p^k,1)$ and furthermore,  with our earlier discussion, it reduces in  finding $g(p^k,0).$ For a given a given odd prime $p$ and $k\in \N,$ we define the set $G_{p^k}= \bigcup\limits_{p\mid x} G(p^k,x).$ We have $|G_{p^k}|=p^{k-1}g(p^k,0),$ as $|\{x|\;1\le x\le n,p|x\}|= p^k - \varphi(p^k)=p^{k-1}.$

We define the map $\pi_{p^k}: G_{p^k}\to G(p,0)$ as $[c_{ij}] \mapsto [c_{ij} \pmod{p}].$ It is easy to see that if $A=[a_{ij}]\in G(p,0),$ then $|\pi^{-1}_{p^k}(A)|=p^{9(k-1)}$ as if $B=[b_{ij}]\in \pi^{-1}_{p^k}(A),$ then 
$b_{ij}=a_{ij}+c_{ij},$ where $p|c_{ij}.$ 
The following example illustrates the same.

\begin{ex}
Consider the case when $p=3$ and $k=2.$ There, the set $G_9$ will be as follows : 
    $$G_9 = G(9,0)\bigcup G(9,3) \bigcup G(9,6).$$
The map $\pi_9$ reduces every matrix in $G$ to a matrix in $G(3,0).$ Furthermore, each element of $G(3,0)$ has $3^9 = 19683$ pre-images. For example consider the matrix $\begin{pmatrix}
1 & 0 & 0 \\
2 & 1& 2\\
1 & 1& 1
\end{pmatrix}\in G(3,0).$ It has a preimage $\begin{pmatrix}
4 & 0 & 0 \\
2 & 1& 2\\
1 & 1& 1
\end{pmatrix} \in G(9,3).$ The matrix $\begin{pmatrix}
1 & 3 & 0 \\
2 & 1& 2\\
1 & 1& 1
\end{pmatrix} \in G(9,6)$ is also an example of a preimage. 
\end{ex}

Now the proof of Theorem~\ref{thm:th3} follows from  the following equation which derived from discussion before the previous example. 
\begin{align*}
    p^{k-1}g(p^k,0) &= p^{9(k-1)}g(p,0), \\ 
    g(p^k,0) &= p^{8(k-1)}g(p,0).
\end{align*}

 Now we can also get exact value of $g(p^k,1)$ in the following manner.  
\begin{align*}
    |GL_3(\Z_{p^k})| &= p^{9(k-1)}\times|GL_3(\Z_p)|\\
    &= p^{(k-1)}\times g(p^k,0) + (p^k - p^{(k-1)})\times g(p^k,1). \\ 
    g(p^k,1) &= \frac{p^{8(k-1)[|GL_3(\Z_p)| - g(p,0)]}}{p-1}.
\end{align*}

\section{Computation of  $g(p,0)$}\label{sec:g(p,0)}
The following table summarizes how we compute $g(p,0).$ We illustrate few cases, the procedure for the remaining cases is similar. \\
\vglue 2mm
\begin{tabular}{|l|l|l|}
\hline
 Condition & Subconditions & Number of matrices\\
 \hline
      \multirow{3}{*}{Only one entry in the 1st row is nonzero}&& \\
      && $3p^2(p-1)^4$\\ 
      &&\\
      \hline
\multirow{3}{*}{Only one entry in 1st row is zero} &Only one entry in 2nd row is nonzero& $3p(p-1)^4$\\ 
      &Only one entry in  2nd row is zero& $6p(p-1)^5 +3p(p-1)^4$ \\
      &All the entries of 2nd row are nonzero&$3p(p-1)^6$\\
      \hline
\multirow{3}{*}{All the entries of 1st row are nonzero} & Only one entry in 2nd row is nonzero&$3p(p-1)^5$ \\ 
      &Only one entry in  2nd row is zero & $3p(p-1)^6$ \\
      &All the entries of 2nd row are nonzero& **\\
      \hline
  \multicolumn{3}{|l|}
  {Here $**=\begin{cases}
                    p^2(p-1)^5(p-2) & \mbox{if $p-3$ is  not a quadratic residue modulo $p,$}\\
                   p(p-1)^5(p^2-2p-2) & \mbox{if $p-3$ is a quadratic residue modulo $p.$}\\
                   \end{cases}$
        }
 \\

      \hline
    \end{tabular}

\begin{itemize}
 \item When the first row contains only one non-zero entry:  Without loss of generality, let $A=[a_{ij}]$ with $a_{11} \neq 0, a_{12} = a_{13} = 0.$ Clearly, $a_{11}$ has a total of $(p-1)$ choices. We now look at the possible choices for the other six entries of $A.$ Consider the submatrix $\begin{pmatrix}
a_{22} & a_{23} \\
a_{32} & a_{33}
\end{pmatrix}.$ The permanent of this matrix has to be divisible by $p$ and determinant cannot be divisible by $p.$  Therefore there are a total of $g_2(p,0) = (p-1)^3$ \cite{A:S} choices for the four entries $a_{22}, a_{23}, a_{32}$ and $a_{33}.$ The remaining two entries $a_{21}$ and $a_{31}$ each have $p$ choices. So the total number of ways becomes $p^2 \times (p-1)^4.$ Hence  total choices in this case is $3p^2 \times (p-1)^4.$

\item Exactly two entries in the first row are nonzero and all the entries in the second row are nonzero:
Let $A = \begin{pmatrix}
            a_{11} & a_{12} & 0 \\
            a_{21} & a_{22} & a_{23} \\
            x & y & z 
            \end{pmatrix} \in G(p,0).$ Then each entry in the second row
has $(p-1)$ choices each. The congruences for A are now \begin{align*}
    perm(A)&= a_{11}(a_{22}z + a_{33}y) + a_{12}(a_{21}z + a_{23}x) \equiv 0 \pmod{p}, \\ 
    \det(A)&= a_{11}(a_{22}z - a_{33}y) - a_{12}(a_{21}z - a_{23}x) \not\equiv 0 \pmod{p}.
\end{align*}
Let $z$ be the free variable, so it has $p$ choices. Now we will exclude certain values of $x$ and $y$ which will fail invertibility of $A.$ We do not want the following system to have a solution, after we fix $z = z_{o}$ : 
\begin{align*}
    a_{12}a_{23}x + a_{11}a_{23}y & \equiv -z_{o}(a_{11}a_{22} + a_{12}a_{21}), \\ 
    a_{12}a_{23}x - a_{11}a_{23}y &\equiv -z_{o}(a_{11}a_{22} - a_{12}a_{21}).
\end{align*} 
Since the determinant of the coefficient matrix is invertible, this system admits a unique solution, which will make $A$ non-invertible. So we exclude that one choice of $(x,y).$ Now choosing any of the $(p-1)$ values of $x$ and using the permanent congruence to get a uniquely determined value of $y$ will work. Thus there are a total of $(p-1)$ ways to choose the ordered pair $(x,y).$ After considering choices for the first row we end up  with $3p(p-1)^6$ ways.

\item If all the entries of first row are nonzero and only one entry in the second row is zero: Then the matrix looks like $A=\begin{pmatrix}
     a & b & c\\
     \al& \be & \ga\\
     x & y & z                                                                                                                              \end{pmatrix}\in G(p,0)$ with only one of $\al,\be, \ga$ is nonzero. Suppose $\al\ne 0,\be=\ga=0.$
     There are $(p-1)$ choices for $\alpha.$ The congruences for $A$ are now : \begin{align*}
     perm(A) &= \alpha(cy + bz) \equiv 0 \pmod{p}, \\ 
     \det(A) &= \alpha(cy - bz) \not\equiv 0 \pmod{p}.
 \end{align*}
 Since $x$ does not appear in the above equations, it is free and has $p$ choices. We need to exclude one choice for $(y,z),$ namely, the unique solution to this system : 
 \begin{align*}
     \alpha(cy + bz)& \equiv 0 \pmod{p},\\ 
     \alpha(cy - bz) &\equiv 0 \pmod{p}.
 \end{align*}
 After throwing that choice of $(y,z),$ we are left with $(p-1)$ choices for the ordered pair $(y,z).$ Thus the total ways in this subcase after including the choices for the first row is $3p(p-1)^5.$ The factor $3$ appears because we could have started with $\beta \neq 0$ or $\gamma \neq 0.$
 \end{itemize}
    
Let $p$ be a prime such that $p-3$ is  not a quadratic residue modulo $p,$ then 
\begin{align*}
 g(p,0) &= 3p^2(p-1)^4 + 3p^2(p-1)^4(p+1) + (p-1)^3[3p(p-1)^2 + 3p(p-1)^3 + p^2(p-1)^2(p-2)] \\ 
 &= 3p^2(p-1)^4 + 3p^2(p-1)^4(p+1) + p(p-1)^5[3 + 3p-3 + p^2 - 2p] \\ 
 &= 3p^2(p-1)^4 + 3p^2(p-1)^4(p+1) + p^2(p-1)^5(p+1) \\ 
  &= p^2(p-1)^4(p^2 + 3p + 5).
\end{align*}
Let's check this with the actual value which a computer code gives out. \begin{enumerate}
    \item When $p = 3, g(3,0) = 9\times16\times23 = 3312$
    \item When $p = 5, g(5,0) = 25\times256\times45 = 288000$
    \item When $p = 7, g(7,0) = 49\times36\times36\times75 = 4762800$
    \item When $p = 11, g(11,0) = 121\times10000\times159 = 192390000.$
    \item When $p = 13, g(13,0) = 169\times144\times144\times213 = 746433792.$
\end{enumerate}
The computer program agrees with our values when $p = 3,5,11$ but not when $p=7,13.$ The code gives an output of $g(7,0) = 4653936$ and $g(13,0) = 739964160.$ So what is different in these cases? Let us consider the case when $p=7$ and the following matrix 
$A = 
\begin{pmatrix}
1 & 1 & 2 \\
1 & 2 & 1 \\
x & y & z
\end{pmatrix}.$ The equations for this matrix become \begin{align*}
    perm(A) &= 5x + 3y + 3z \equiv 0 \pmod{7}, \\
    \det(A) &= 4x + y + z \not\equiv 0 \pmod{7}.
\end{align*}
After a quick inspection, one can see that there is no ordered triad $(x,y,z)$ satisfying the above two congruences as $5(5x + 3y + 3z) \equiv  4x + y + z \pmod{7}.$  But when we take the same matrix but change $p$ to $3,5$ or $11$ then the system admits a solution. So there is something different with the prime $7$ here and $13$ too, because they are throwing out some additional choices for the second row  as well, apart from the multiples of the first row. 

\subsection{Role of Quadratic Residues}
What we noticed that  when $n=7$ or $n=13$  is that each determinant of the three $2\times2$ submatrices formed from last two rows was a non-zero scalar multiple of the respective permanents of those submatrices. Let us explore this further. Note that we are in the case that $p\nmid a\cdot b\cdot c\cdot\al\cdot\be\cdot\ga,$ where $A = \begin{pmatrix}
            a & b & c \\
            \alpha & \beta & \gamma \\
            x & y & z 
            \end{pmatrix} \in G(p,0).$ Since $\ga\ne 0,$ it has $p-1$ choices. 
Suppose that 
\begin{equation}\label{eq:theta}
 \frac{b\gamma + c\beta}{b\gamma - c\beta}=\frac{c\alpha + a\gamma}{c\alpha - a\gamma} = \frac{a\beta + b\alpha}{a\beta - b\alpha} = \theta.
\end{equation}

 It is clear that  $\frac{b\gamma + c\beta}{b\gamma - c\beta}\notin \{1,p-1\}$ if for example, $\frac{b\gamma + c\beta}{b\gamma - c\beta}=1$, then $c\be=0.$ Thus $\theta\in \{2,3,\ldots,p-2\}.$
Also from Equation~\ref{eq:theta} we have the following.
    
$$\alpha = \frac{a\gamma(\theta + 1)}{c(\theta - 1)} = \frac{a\beta(\theta - 1)}{b(\theta + 1)}$$
Again from  $\theta =  \frac{b\gamma + c\beta}{b\gamma - c\beta}$ we have 
$\theta + 1 = \frac{2b\gamma}{b\gamma - c\beta}, \theta - 1 = \frac{2c\beta}{b\gamma - c\beta}.$

Substituting these values, we arrive at 
$ \beta^3 = \frac{b^3\gamma^3}{c^3} = \left(\frac{b\gamma}{c}\right)^3.$

Now from $b\gamma - c\beta \not\equiv 0 \pmod{p}$ we have
   \begin{align*}   
\beta^3 - \left(\frac{b\gamma}{c}\right)^3 &\equiv \left(\beta - \frac{b\gamma}{c}\right)\left(\beta^2 + \left(\frac{b\gamma}{c}\right)\beta + \left(\frac{b\gamma}{c}\right)^2\right) \equiv 0 \pmod{p} \\ 
&\iff \beta^2 + \left(\frac{b\gamma}{c}\right)\beta + \left(\frac{b\gamma}{c}\right)^2 \equiv 0 \pmod{p}.
\end{align*}
This means that $\beta$ has to solve the above quadratic congruence. That can only happen when, the following quadratic congruence in $u$ admits a solution \begin{center}
    $u^2 \equiv \left(\frac{b\gamma}{c}\right)^2 - 4\left(\frac{b\gamma}{c}\right)^2 \equiv -3\left(\frac{b\gamma}{c}\right)^2 \pmod{p}.$
\end{center}
According to Euler's Criterion, the above quadratic congruence in $u$ admits a solution if and only if \begin{center}
    $\left(-3\left(\frac{b\gamma}{c}\right)^2\right)^{\frac{p-1}{2}} \equiv (p-3)^{\frac{p-1}{2}} \equiv 1 \pmod{p} \iff (p-3)$ is a quadratic residue of $p.$ 
\end{center}
 When $(p-3)$ is a quadratic residue of $p,$ then for every choice of $\gamma$ there will be two values of $\beta$ which will each lead to a unique value of $\alpha$ for which we will not get any $(x,y,z)$ satisfying the two congruences. Thus the correction is merely $2(p-1).$ The expression now is $p(p-1)[(p-1)^3 - (p-1) - 2(p-1)] = p(p-1)^2[p^2 + 1 -2p - 1 - 2] = p(p-1)^2(p^2 - 2p - 2).$
Thus, we arrive at the modified expression when $(p-3)$ is a quadratic residue of $p$ as  
    $$g(p,0) = p(p-1)^4[(p+1)^3 + 1].$$

A quick check gives us \begin{enumerate}
    \item $g(7,0) = 7\times36\times36\times513 = 4653936.$ 
    \item $g(13,0) = 13\times144\times144\times2745 = 739964160.$
\end{enumerate}
Both these values agree with the output given by the computer program. Thus Theorem~\ref{thm:th4} is now proved.

\subsection{Computation of $g(p,0,i,j)$}\label{subsec:gpoij}
If $p|x,$ then one can observe  more facts  about the number of elements in the sets $$G(p,x,1,1), G(p,x,1,2), \\ G(p,x,1,3), G(p,x,2,1) \;\mbox{and} \;G(p,x,2,2).$$ Those observations can be derived from the following table.

\begin{tabular}{|c| c| c| c| c| c| c| }
\hline
n  & g(n,0)    & g(n,0,1,1) & g(n,0,1,2) & g(n,0,1,3) & g(n,0,2,1) & g(n,0,2,2)   \\
\hline
3  & 3312      & 2208       & 576        & 96         & 384        & 48              \\
5  & 288000    & 225280     & 38400      & 5120       & 17920      & 1280           \\
7  & 4653936   & 3900960    & 508032     & 54432      & 181440     & 9072          \\
9  & 21730032  & 14486688   & 3779136    & 629856     & 2519424    & 314928       \\
11 & 192390000 & 173140000  & 14520000   & 1100000    & 3520000    & 110000      \\
13 & 739964160 & 677154816  & 49061376   & 3234816    & 10243584   & 269568      \\
\hline
\end{tabular}

\begin{theorem}\label{thm:th5}
Let $p$ be an odd prime. Then 
\begin{enumerate}
 \item \label{thm:th5:1} $g(p,0,2,2) = p(p-1)^4.$ 
 \item \label{thm:th5:2} $g(p,0,2,1) = (3p-1)g(p,0,2,2).$
 \item \label{thm:th5:3} $g(p,0,1,3) = (p-1)g(p,0,2,2).$
 \item \label{thm:th5:4} $g(p,0,1,2) = p(p+1)g(p,0,2,2).$
 \item \label{thm:th5:5} $g(p,0,1,1) = \begin{cases} (p+3)(p^2-p+1)g(p,0,2,2) & \mbox{if } (p-3) \mbox{ is a quadratic residue of  $p$,} \\ 
 (p^3+2p^2+1)g(p,0,2,2) & \mbox{otherwise.}
 \end{cases}$
\end{enumerate}
\begin{proof}
Let $A = \begin{pmatrix}
a_{11} & a_{12} & a_{13} \\
a_{21} & a_{22} & a_{23} \\
a_{31} & a_{32} & a_{33}
\end{pmatrix}.$ Recall the following definitions. 
    $$P_{11} = a_{22}a_{33} + a_{23}a_{32}, \;\;P_{12} = a_{21}a_{33} + a_{23}a_{31},\;\; P_{13} = a_{21}a_{32} + a_{22}a_{31},$$
    $$P_{21} = a_{12}a_{33} + a_{13}a_{32},\;\;\;P_{22} = a_{11}a_{33} + a_{13}a_{31}.$$ Note that in the rest of the proof,  $x=0$ we mean $x\equiv 0 \pmod p.$\\
 Proof of Part~\ref{thm:th5:1}. 
 First we show that that $a_{22}a_{33} - a_{23}a_{32} \not\equiv 0 \pmod{p}.$  For if it was, that would mean $a_{22}a_{33} \equiv 0 \pmod p$ and $a_{23}a_{32} \equiv 0 \pmod p,$ as $p|P_{11}.$ So we are left with either of the four cases $(a_{22},a_{23}) = (0,0)$ or $(a_{22},a_{32}) = (0,0)$ or $(a_{33},a_{23}) = (0,0)$ or $(a_{33},a_{32}) = (0,0).$ Suppose $(a_{22},a_{23}) = (0,0).$ Then from $p|P_{12},$ we have $a_{33} = 0.$ Again from $p|P_{21}$ we have $a_{13} = 0$ or $a_{32} = 0.$ Both these choices contradict invertibility of $A.$ The other three cases can be shown similarly. Thus $a_{22}a_{33} - a_{23}a_{32} \not\equiv 0 \pmod{p}.$ Since 
 $$ a_{21}a_{33} + a_{23}a_{31}\equiv 0\pmod p,\;\;  a_{21}a_{32} + a_{22}a_{31}\equiv 0\pmod p,$$ 
  we are left with $a_{21} = a_{31} = 0.$ Now $\begin{pmatrix}
a_{22} & a_{23} \\
a_{32} & a_{33}
\end{pmatrix}$ has $g_{2}(p,0) = (p-1)^3$ choices and $a_{11}$ has $(p-1)$ choices. Let $a_{12}$ be free to take $p$ choices, after which $a_{13}$ is uniquely determined from $p|P_{21}.$ Thus $g(p,0,2,2) = p(p-1)^4.$

Proof of Part~\ref{thm:th5:2}. 
We claim that any matrix belonging to the $G(p,0,2,1)$ must be in one of the following forms,
$\begin{pmatrix}
a_{11} & a_{12} & a_{13} \\
0 & a_{22} & a_{23} \\
0 & a_{32} & a_{33}
\end{pmatrix}, \begin{pmatrix}
a_{11} & a_{12} & a_{13} \\
a_{21} & 0 & a_{23} \\
a_{31} & 0 & a_{33}
\end{pmatrix}$  and $\begin{pmatrix}
a_{11} & a_{12} & a_{13} \\
a_{21} & a_{22} & 0 \\
a_{31} & a_{32} & 0
\end{pmatrix}.$ If a matrix $A\in G(p,0,2,1)$ is in one of the above three forms, then from $p|perm(A)$ and $p\nmid \det(A)$  the remaining entries of second and third row are nonzero refer~\cite{A:S}. 

First of all we see that if $a_{21} = 0$ then $a_{31}$ must necessarily be $0$ as well. For if it was not the case then from $p \mid P_{13}$ we get $a_{22} = 0.$ Now from $p\mid P_{11}$ we get either $a_{23} = 0$ or $a_{32} = 0.$ The former contradicts invertibility of $A$ while the latter contradicts the congruences.
    $$perm(A) \equiv 0 \pmod{p}, \;\; \det(A) \not\equiv 0 \pmod{p}.$$
So we see that either both $a_{21}$ and $a_{31}$ are $0$, or else none of them is. In a similar way we can prove this for $(a_{22},a_{32})$ and $(a_{23},a_{33}).$ \\
Let $A \in G(p,0,2,1).$ If $a_{21}=0$ then $A$ is of the first form and we are done. If not, then $a_{21} \neq 0$ and $a_{31} \neq 0.$ Now if $a_{22} = 0,$ then $A$ is of the second form and we are done. Otherwise $a_{22}\neq 0$ and $a_{32} \neq 0.$ Thus all the entries of the submatrix $\begin{pmatrix}
a_{21} & a_{22} \\
a_{31} & a_{32}
\end{pmatrix}$ are non-zero. Clearly the determinant of this submatrix cannot be zero, otherwise it would contradict the fact that all the entries are non-zero. Now look at $a_{23}$ and $a_{33}$ as variables in the system of congruences.
$$a_{22}a_{33} + a_{23}a_{32} \equiv 0 \pmod{p}\;\;
P_{12} = a_{21}a_{33} + a_{23}a_{31} \equiv 0 \pmod{p}.$$

Since $a_{21}a_{32} - a_{22}a_{31} \neq 0,$ this system has a unique solution, which is $a_{23} = a_{33} = 0.$ Thus we have shown that every matrix of $G(p,0,2,1)$ must be one of these three forms. We now count each of them.

We count the possible choices when the matrix looks like $\begin{pmatrix}
a_{11} & a_{12} & a_{13} \\
0 & a_{22} & a_{23} \\
0 & a_{32} & a_{33}
\end{pmatrix}.$ There are $g_2(p,0) = (p-1)^3$ ways to choose the submatrix $\begin{pmatrix}
a_{22} & a_{23} \\
a_{32} & a_{33}
\end{pmatrix}$ and $(p-1)$ ways to choose $a_{11}.$ There are $p$ choices for $a_{12}.$ After we choose $a_{12},$ there will be one choice of $a_{13}$ which will make $P_{21} = 0,$ which we do not want. So there are $(p-1)$ choices for $a_{13}.$ So the total choices become $p(p-1)^5.$
We now count the possible choices when the matrix looks like $\begin{pmatrix}
a_{11} & a_{12} & a_{13} \\
a_{21} & 0 & a_{23} \\
a_{31} & 0 & a_{33}
\end{pmatrix}$ or $\begin{pmatrix}
a_{11} & a_{12} & a_{13} \\
a_{21} & a_{22} & 0 \\
a_{31} & a_{32} & 0
\end{pmatrix}.$ In fact, they both have equal number of choices, as will be clear after we illustrate how to count one of them. Suppose we are counting the matrices of the form $\begin{pmatrix}
a_{11} & a_{12} & a_{13} \\
a_{21} & 0 & a_{23} \\
a_{31} & 0 & a_{33}
\end{pmatrix}.$ There are $g_2(p,0) = (p-1)^3$ ways to choose the submatrix $\begin{pmatrix}
a_{21} & a_{23} \\
a_{31} & a_{33}
\end{pmatrix}$ and $(p-1)$ ways to choose $a_{12}.$ Note how $P_{21} = a_{12}a_{33} + a_{13}a_{32} = a_{12}a_{33}.$ So there are $p$ choices each for $a_{13}$ and $a_{11}.$ Thus the total choices for the second form are $p^2(p-1)^4.$ But the other form is dealt with in the exact same way as well. Finally we arrive at 
$$g(p,0,2,1) = p(p-1)^5 + 2p^2(p-1)^4 = p(p-1)^4(3p-1) = (3p-1)g(p,0,2,2).$$

Proof of Part~\ref{thm:th5:3}.
In this case we have $0=perm(A)=a_{13}P_{13}$ and $p\nmid P_{13}$ hence $a_{13} = 0.$   We also claim that $p\nmid a_{21}\cdot a_{22} \cdot a_{23}.$ If $p|a_{21},$ then from $p\mid P_{12}$ we get $a_{23} = 0$ or $a_{31} = 0.$ If $a_{31} = 0,$ then  $p \nmid P_{13}$ gives $a_{21} \neq 0,$ which is absurd. If $a_{23} = 0$ then $p\mid P_{11}$ gives either $a_{22} = 0$ or $a_{33} = 0,$ both of which contradict the invertibility of $A.$ In a similar way we can show that $p\nmid a_{22}a_{23}.$

Now once we fix $a_{23}$ which has $(p-1)$ choices, we see that $a_{33}$ is determined as $a_{33} = \frac{-a_{32}a_{23}}{a_{22}} = \frac{-a_{31}a_{23}}{a_{21}}.$ This gives $a_{21}a_{32} - a_{22}a_{31} = 0.$ So how many choices are there for $B=\begin{pmatrix}
a_{21} & a_{22} \\
a_{31} & a_{32}
\end{pmatrix}$? We claim that none of the $a_{31}$ or $a_{32}$ can be zero. Suppose $a_{31} = 0.$ Then from $p\mid P_{12},$ we have either $a_{21} = 0$  or $a_{33} = 0.$ Thus if $a_{31} = 0$, then we have $a_{33} = 0.$ But then from $a_{33} = 0,$  $p \mid P_{11}$ we get that $a_{32} = 0,$ so $A$ cannot be invertible. It can be proved in a similar way that $a_{32} \ne  0.$ With these observations we are ready to compute $g(p,0,1,3).$

From $a_{21} \neq 0, a_{22} \neq 0$ the total choices for the first row of $B$ are $(p-1)^2$. The second row of $B$ can only be a non-zero multiple of the first row, thus it has $(p-1)$ choices. Thus the total choices for  the matrix $B$ are $(p-1)^3.$ Let $a_{11}$ have $p$ choices. Once we fix $a_{11},$ then it follows from $\det(A) \equiv 0 \pmod{p}$ and $a_{21}a_{33} - a_{23}a_{31} \neq 0$ we have  $(p-1)$ choices for $a_{12}.$ Thus the total choices become $p(p-1)^5 = (p-1)g(p,0,2,2).$\\
Proof of Part~\ref{thm:th5:4}. 
First observe that it is impossible to have $a_{23} = a_{33} = 0$ because that would contradict $p\nmid P_{12}.$
We prove the result in two cases $p|a_{23}\cdot a_{33}$ and $p\nmid a_{23}\cdot a_{33}.$

Let $p|a_{33} .$ Then from $p\mid P_{11} = a_{22}a_{33} + a_{23}a_{32}$ we have $p|a_{32}.$
This leaves $g_2(p,0) = (p-1)^3$ choices for $\begin{pmatrix}
a_{12} & a_{13} \\
a_{22} & a_{23}
\end{pmatrix}$ and $(p-1)$ choices for $a_{31}.$ Finally $a_{11}$ and $a_{12}$ are free to take any values, thus the total choices are $p^2(p-1)^4.$ Same with  $p|a_{23}.$ Thus the total number of matrices when $p|a_{23}\cdot a_{33}$ are $2p^2(p-1)^4.$

We now suppose that $p\nmid a_{23}\cdot a_{33}.$ There are $(p-1)^2$ ways to choose the ordered pair $(a_{23},a_{33}).$ 
In this case it is easy to see that $p\nmid a_{22}a_{32}.$ First observe that $p|P_{11}$ hence $p| a_{22}$ if and only if $p| a_{32}.$ Hence if $p|a_{22},$ then $perm(A)=a_{12}P_{12}$ forces $a_{12}=0$ which contradicts the invertibility of $A.$

 So now $a_{22}$ has $(p-1)$ choices. After fixing $a_{22}, $ $a_{32}$ is uniquely determined from $p\mid P_{11}.$ Now let $a_{21}$ be free to take any of the $p$ values. Since $p \nmid P_{12} = a_{21}a_{33} + a_{23}a_{31},$ there is a unique value of $a_{31}$ which will render $P_{12} = 0.$ We throw out that value, so we are left with $(p-1)$ choices for $a_{31}.$ Now all that is left is to choose the first row.

 Let $a_{13}$ be free to take $p$ values. Then $a_{12}$ is uniquely determined because of the relation $perm(A) =  a_{12}P_{12} + a_{13}P_{13}.$  Finally we need to choose $a_{11}.$ We claim first that $(a_{22}a_{33} - a_{23}a_{32}) \neq 0.$ Otherwise we would end up with multiple zeroes in $\begin{pmatrix}
a_{22} & a_{23} \\
a_{32} & a_{33}
\end{pmatrix}$ which is absurd since in this case all of the four entries are non-zero. Therefore we are left with $(p-1)$ values of $a_{11}$ because we will throw one value out which will make the determinant of $A$ zero. Thus the total choices in this case are $p^2(p-1)^5.$
Thus,   
$$g(p,0,1,2) = 2p^2(p-1)^4 + p^2(p-1)^5 = p^2(p-1)^4(p+1) = p(p+1)g(p,0,2,2).$$

Proof of Part~\ref{thm:th5:5}.
It is easy to see  that $$g(p,0,1,1)=g(p,0)-g(p,0,1,2)-g(p,0,1,3)-g(p,0,2,1)-g(p,0,2,2).$$
\end{proof}
\end{theorem}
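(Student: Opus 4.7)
The plan is to exploit the hierarchical nature of the sets $G(p,0,i,j)$: the conditions ``$p\mid P_{kl}$'' for indices $(k,l)$ preceding $(i,j)$ impose linear relations mod $p$ on the entries of $A$, and each such relation collapses degrees of freedom. In every part the count will factor through the $2\times 2$ statistic $g_2(p,0)=(p-1)^3$ from~\cite{A:S}, which explains why each answer ends up as a polynomial multiple of the base quantity $p(p-1)^4$.

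For Part~\ref{thm:th5:1} I would read $p\mid P_{12}$ and $p\mid P_{13}$ as a homogeneous linear system in $(a_{21},a_{31})$ with coefficient determinant $a_{22}a_{33}-a_{23}a_{32}$. A short case analysis shows that if this determinant vanished mod $p$, then combined with $p\mid P_{11}$ it would force two of $a_{22},a_{23},a_{32},a_{33}$ to be zero, contradicting invertibility of $A$. Hence $a_{21}\equiv a_{31}\equiv 0\pmod p$; the remaining lower-right $2\times 2$ block contributes $g_2(p,0)=(p-1)^3$, the entry $a_{11}$ (forced to be a unit by $p\nmid P_{22}=a_{11}a_{33}$) contributes $p-1$, and the pair $(a_{12},a_{13})$ is constrained by the single linear equation $P_{21}\equiv 0$ giving $p$ choices; the product is $p(p-1)^4$.

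For Parts~\ref{thm:th5:2}--\ref{thm:th5:4} I would carry out analogous linear-algebra bookkeeping. In Part~\ref{thm:th5:2} I expect to show that any $A\in G(p,0,2,1)$ takes one of three ``column-pair-zero'' shapes, according to which of the pairs $(a_{21},a_{31})$, $(a_{22},a_{32})$, $(a_{23},a_{33})$ vanishes identically: the conditions $p\mid P_{11},P_{12},P_{13}$ together with invertibility show that inside the bottom $2\times 2$ block zero entries come in column pairs, and not all three columns can be simultaneously nonzero. Counting each shape via $g_2(p,0)$ on the appropriate block produces $p(p-1)^5+2p^2(p-1)^4=(3p-1)g(p,0,2,2)$. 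For Part~\ref{thm:th5:3} the expansion $\operatorname{perm}(A)=a_{11}P_{11}+a_{12}P_{12}+a_{13}P_{13}$ with $p\nmid P_{13}$ immediately forces $a_{13}\equiv 0$; one then shows the block $\bigl(\begin{smallmatrix}a_{21}&a_{22}\\a_{31}&a_{32}\end{smallmatrix}\bigr)$ has all nonzero entries with proportional rows, and accounting for the remaining dependent entries gives $p(p-1)^5$. For Part~\ref{thm:th5:4} I would split on whether $p\mid a_{23}a_{33}$: the two symmetric degenerate subcases contribute $2p^2(p-1)^4$, and the generic case $p\nmid a_{23}a_{33}$ contributes $p^2(p-1)^5$.

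Part~\ref{thm:th5:5} is then a one-line subtraction. By Lemma~\ref{lem:Gpkoij} the set $G(p,0)$ is the disjoint union of the five $G(p,0,i,j)$, so
\[
  g(p,0,1,1)=g(p,0)-g(p,0,1,2)-g(p,0,1,3)-g(p,0,2,1)-g(p,0,2,2),
\]
and plugging in the four quantities just computed along with the two-case expression for $g(p,0)$ from Theorem~\ref{thm:th4} produces the stated quadratic-residue dichotomy verbatim. The main obstacle I anticipate is the shape enumeration in Part~\ref{thm:th5:2}: pinning down exactly which zero patterns of the bottom $2\times 2$ block are compatible with the permanent/determinant constraints, without double-counting across the three shapes, will require careful tracking of which $2\times 2$ minors of $A$ are forced to be singular versus invertible mod $p$.
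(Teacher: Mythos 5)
Your proposal is correct and follows essentially the same route as the paper: the same forced-vanishing argument via the nonsingularity of $a_{22}a_{33}-a_{23}a_{32}$ in Part~\ref{thm:th5:1}, the same three-shape decomposition in Part~\ref{thm:th5:2}, the same degenerate/generic splits in Parts~\ref{thm:th5:3} and~\ref{thm:th5:4}, and the same subtraction using Lemma~\ref{lem:Gpkoij} and Theorem~\ref{thm:th4} for Part~\ref{thm:th5:5}. No substantive differences to report.
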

We now compute $g(p,0,1,1)$ independently. With that  we have an alternate proof for Theorem~\ref{thm:th4}.

\subsection{Independent proof of Part~\ref{thm:th5:5} of Theorem~\ref{thm:th5}}
Let $A=[a_{ij}]\in G(p,0,1,1).$ Then we have $p\nmid P_{11}.$ Let $D_{11} = \det(B),$ where $B = \begin{pmatrix}
a_{22} & a_{23} \\
a_{32} & a_{33}
\end{pmatrix}.$

Fist we count number of matrices in $G(p,0,1,1)$ with $D_{11}=0.$ First note that none of the entries of $B$ is zero. For example if $a_{22}=0$ then we have  either $a_{23} = 0$ or $a_{32} = 0.$ But that forces $p|P_{11}.$ Hence the matrix $B$ can be choosen $(p-1)^3$ ways, as  $(a_{22},a_{23})$ can be chosen in $(p-1)^2$ ways and then we are left with $(p-1)$ choices  for second row of $B.$ Let $a_{13}, a_{31}$ be free to choose any values, then it is easy to see that each of $a_{12}, a_{21}$ can take $p-1$ values. Thus total number of matrices in $G(p,0,1,1)$ with $D_{11}=0$ are  $p^2(p-1)^5.$

\subsubsection{When $D_{11} \neq 0$}
First of all note that we cannot have $(a_{21},a_{31}) = (0,0)$ for this would contradict either the invertibility of $A$ if  $a_{11} = 0$ or  $perm(A) = 0$ if  $a_{11} \ne 0$ as $p\nmid P_{11}.$ 
The following table illustrates how the rest of the proof follows.\\
\vglue 2mm
\begin{tabular}{|l|l|l|}
\hline
 Condition & Subconditions & Number of matrices\\
 \hline
      \multirow{3}{*}{$p|a_{21} a_{31}$}&& \\
      && $2p(p-1)^4(p^2+1)$\\ 
      &&\\
      \hline
      
\multirow{3}{*}{$p\nmid a_{21} a_{31}$ } &Exactly one of $a_{22}=0$ or $a_{32}=0$& $2p^2(p-1)^5$\\ 
      &$p\nmid a_{22}a_{32}$ but exactly one of $a_{23}$ or $a_{33}$ is zero& $2p(p-1)^6$ \\
      
      & All the entries of 2nd and 3rd row are nonzero &**\\
          \hline
       
  \multicolumn{3}{|l|}
  {$**=\begin{cases}
                     &p(p-1)^4[(p-1)^3-2(p-1)^2] \;\;\mbox{if $p-3$ is  not a quadratic residue modulo $p,$}\\
                    & p(p-1)^4[(p-1)^3-2(p-1)^2-2(p-1)]\;\;\mbox{if $p-3$ is a quadratic residue modulo $p.$}\\
                   \end{cases}$
        }
 \\

      \hline
    \end{tabular}\\
    \vglue 2mm
  Finally   $p^2(p-1)^5 + 2p(p-1)^4(p^2+1) + 2p^2(p-1)^5 + 2p(p-1)^6 + **$ is the required value for $g(p,0,1,1).$
\begin{description}
 \item[$a_{21} = 0$ or $a_{31} = 0$:] Let $a_{21}=0.$ Then $a_{31}$ has $(p-1)$ choices. From the paper~\cite{A:S} we have  $g_2(p,1) = (p-1)(p^2+1).$ Consequently, the total choices for $B$ are  $(p-1)^2(p^2+1)$.  Let $a_{12}$ and $a_{13}$ each be free to take $p$ values. Each time we do this, we get a unique value for $a_{11}$ from $perm(A) = 0.$ So there seem to be $p^2$ possible ordered triads $(a_{11},a_{12},a_{13}).$ However we must remove $p$ of them because those will cause the determinant to become $0.$ Thus the first row has $p^2-p $ choices.  So in this case, the total choices are $2p(p-1)^4(p^2+1).$
\item[When $a_{21} \neq 0$ and $a_{31} \neq 0$ but exactly one of $a_{22} = 0$ or $a_{32} = 0$:]
Suppose $a_{22} = 0.$ Then $a_{33}$ can take $p$ values and $p\nmid a_{23}a_{32}$ as  $P_{11} \neq 0.$  So there are a total of $p(p-1)^2$ choices for the submatrix $B.$ There are $(p-1)^2$ choices for $(a_{21},a_{31}).$ Finally as discussed earlier, there are $p(p-1)$ choices for the first row. Thus the total choices are $2p^2(p-1)^5.$

\item[$a_{21}\neq 0, a_{31}\neq 0, a_{22}\neq0 $ and $a_{32}\neq 0$ but exactly one of $a_{23}$ or $a_{33} = 0$:]
If $a_{23} = 0,$ then we have $(p-1)^5$ choices for the second and third row combined. The first row can be chosen in $p(p-1)$ ways. But we could also start with $a_{33} = 0,$ making the total choices for this case $2p(p-1)^6.$
\item[All the entries in 2nd and 3rd row are nonzero:]
There are $(p-1)^3$ choices for the third row.  It is easy to see that there are $[(p-1)^3 - 2(p-1)^2]$ choices for the second row, when $(p-3)$ is not a quadratic residue of $p.$
This is because we have two more constraints, namely that $P_{11} \neq 0$ and $D_{11} \neq 0.$ So every time we fix $a_{21}$ and $a_{22},$ we will get one value of $a_{23}$ that will make $P_{11} = 0$ and one value will make $D_{11} = 0.$ 
Now  after fixing the third row, we  imitate the proof of part related to the role of quadratic residues in  Theorem~\ref{thm:th4}. So we subtract another $2(p-1)$ from the total choices of the second row whenever $(p-3)$ is a quadratic residue of $p.$ Again, after fixing the second and third row, the first row can be chosen in $p(p-1)$ ways. 
\end{description}
Now from the discussion provided in Section~\ref{sec:computation} we can compute $g(p^k,0,i,j)$ from the identity $g(p^k,0,i,j)=p^{8(k-1)}g(p,0,i,j).$ But note that $g(n,0,i,j)$ is not multiplicative in $n.$ Hence finding the value  of $g(n,0,i,j)$ is an interesting problem.

\end{document}